\date{}
\definecolor{codegreen}{rgb}{0,0.6,0}
\definecolor{codegray}{rgb}{0.5,0.5,0.5}
\definecolor{codepurple}{rgb}{0.58,0,0.82}
\definecolor{backcolour}{rgb}{0.95,0.95,0.92}
\lstdefinestyle{mystyle}{
	backgroundcolor=\color{backcolour},   
	commentstyle=\color{codegreen},
	keywordstyle=\color{magenta},
	numberstyle=\tiny\color{codegray},
	stringstyle=\color{codepurple},
	basicstyle=\footnotesize,
	breakatwhitespace=false,         
	breaklines=true,                 
	captionpos=b,                    
	keepspaces=true,                 
	numbers=left,                    
	numbersep=5pt,                  
	showspaces=false,                
	showstringspaces=false,
	showtabs=false,                  
	tabsize=2
}
\newcommand{\Z}{\mathbb{Z}}
\newcommand{\N}{\mathbb{N}}
\newcommand{\SLZ}{SL_{2}(\Z)}
\newcommand*{\cofrac}[2]{%
	{%
		\rlap{$\dfrac{1}{\phantom{#1}}$}%
		\genfrac{}{}{0pt}{0}{}{#1+#2}%
	}%
} 
\newtheorem{lemma}{Lemma}
\newtheorem{conjecture}{Conjecture}
\begin{document}
	
	\title{An Analytic Heuristic for Multiplicity Computation for Zaremba's Conjecture}
	\author{Peter Cohen\\ Bowdoin College}
	\begin{titlepage}
		\maketitle
		\tableofcontents
		\thispagestyle{empty}
		
	\end{titlepage}
		
\setcounter{page}{1}

\section{Abstract}

Zaremba's Conjecture concerns the formation of continued fractions with partial quotients restricted to a given alphabet.  In order to answer the numerous questions that arrive from this conjecture, it is best to consider a semi-group, often denoted $\Gamma_{A}$, which arises naturally as a subset of $\SLZ$  when considering finite continued fractions.  To translate back from this semi-group into rational numbers, we select a projection mapping satisfying certain criteria to recover the numerator and denominator of the continued fractions in rational form.  The central question of our research is to determine the multiplicity of a given denominator.  To this end, we develop a heuristic method similar to the Hardy-Littlewood Circle Method.  We compare this theoretical model to the exact data, gleaned by simulation, and demonstrate that our formula appears to be asymptotically valid.  We then evaluate different aspects of the accuracy of our formula.  

\section{Introduction}

For any real number $\alpha \in [0, 1]$ we may write $\alpha$ as a continued fraction of the form
\begin{equation}
	\alpha = \cofrac{a_{1}}{
		\cofrac{a_{2}}{
			\cofrac{a_{3}}{
				\genfrac{}{}{0pt}{0}{}{\ddots.}}}} 
\end{equation} 

In this notation, each $a_{i}$ is called a partial quotient, and we will denote $\alpha$ by $[a_{1}, \ldots]$.  We restrict the possible values of $a_{i}$ to be in some alphabet $\mathcal{A} \subseteq \N$.  It is a well known fact that rational numbers have finite length continued fractions which are unique if restricted to an even number of partial quotients.

Zaremba's Conjecture \cite{ZAR72} states that there exists $A \in \N$ such that for all $q \in \N$ there exists $a \leq q$ that is co-prime to $q$ where $\frac{a}{q}$ has partial quotients bounded by $A$.\footnote{It should be noted that Zaremba's Conjecture has been proven for a density 1 subset of $\N$ by Kontorovich and Bourgain in \cite{KontBour14}.}  Thus, in the case of Zaremba's conjecture, the alphabet $\mathcal{A}$ is simply the set $\{1, \ldots, A\}$.  To study this conjecture, we rely upon the observation that, for $\frac{b}{d} = [a_{1}, \ldots, a_{n}]$
\begin{equation}
\begin{bmatrix}
* & b \\
* & d
\end{bmatrix} = \begin{bmatrix}
0 & 1 \\
1 & a_{1}
\end{bmatrix}	\begin{bmatrix}
0 & 1 \\
1 & a_{2}
\end{bmatrix} \cdots 	\begin{bmatrix}
0 & 1 \\
1 & a_{n}
\end{bmatrix} .
\end{equation}

	
Thus, it is natural to consider the set of \hyperref[Generating set S]{matrices} 
\begin{equation}
	 S = \Bigg\{ 	\begin{bmatrix}
			0 & 1\\
			1 & i
			\end{bmatrix}\Bigg\}_{i = 1}^{A} .
\end{equation}

This set can then be used to form all finite length continued fractions with partial quotients within $\mathcal{A} = \{1, \cdots, A\}$ by forming
\begin{equation}
	\Gamma_{A} = \Braket{S}^{+} \cap \SLZ,
\end{equation}
where $\Braket{S}^{+}$ denotes the semigroup generated by $S$.

Notice that the restriction imposed by intersecting $\Braket{S}^{+}$ with $\SLZ$ causes all elements of $\Gamma_{A}$ to be of an even number of partial quotients; however, as noted before, all rationals may be expressed in this form, and therefore this restriction imposes no restriction on the set of fractions that may be studied using this semi-group $\Gamma_{A}$.
 
We will say that a projection $f: \Gamma_{A} \rightarrow \N$ satisfies the \textbf{local condition} if for any given $m \in \N$ and any $\gamma_{1}, \gamma_{2} \in \Gamma_{A}$
\begin{equation}\label{Local Condition}
	\gamma_{1} \equiv \gamma_{2} \mod{m} \implies f(\gamma_{1}) \equiv f(\gamma_{2}) \mod{m}.
\end{equation}

For our purposes, the projection $f$ selects the bottom-right element of a matrix in $\Gamma_{A}$.  In the language of continued fractions, this projection amounts to selecting the denominator of a rational number represented by a matrix in $\Gamma_{A}$.  The projection $f$ clearly satisfies the local condition.

Through this paper, we seek to address the multiplicity of a particular value $n \in f(\Gamma_{A}) $ where $f(\Gamma_{A})$ is the image of $\Gamma_{A}$ under $f$.  We define multiplicity precisely as
\begin{equation}\label{first definition of multiplicity}
	mult(n) = |f^{-1}(n)|.
\end{equation}

Observe that, for any $n \in \N$, we are guaranteed to have a finite value of $mult(n)$.  This observation comes from the fact that for any given $w \in \Gamma_{A}$ of the form
\[ 
	w = \begin{bmatrix}
		a & b \\
		c & d
		\end{bmatrix}
\]
the largest entry of $w$ is always $d$.  Additionally, acting on $w$ by any element of $\Gamma_{A}$ by right matrix multiplication will form a new matrix $w' \in \Gamma_{A}$ such that the bottom-right element of $w'$ is strictly larger than the bottom-right element of $w$.  A useful consequence of this is that, in the Cayley graph of $\Gamma_{A}$ given with respect to even length products in $S$, only elements on the sphere of radius $n$ in the supremum norm contribute to the multiplicity of a value $n$.  This is a crucial fact to be used later.

From this framework, we build the conjecture that the multiplicity of a given value of $n$ in the image of $\Gamma_{A}$ under the projection mapping that selects the bottom-right entry of $\gamma \in \Gamma_{A}$ is 
\begin{equation}
mult(n) \sim \frac{2 \delta \Big| B_{n}(\Gamma_{A}) \Big|}{n}\prod_{p |n} \Big(\frac{p - 1}{p} \Big) \zeta(2),
\end{equation}
where $\Big| B_{n}(\Gamma_{A}) \Big|$ is the number of elements in the ball of radius $n$ in $\Gamma_{A}$ under the Archimedean metric.

To support this conjecture, we build an approximation to $mult(n)$ via a method similar to the Hardy-Littlewood Circle Method, which is detailed further below.  Rather than using the full extent of Circle Method, we take an approximation to the singular integral and show computationally that this more tractable version is still effective at evaluating the multiplicities of sufficiently large $n$.  We develop a singular series as a critical component of the approximation to $mult(n)$.

We present evidence that our heuristic is reasonable based upon the average value of the singular series.  Additionally, using large-scale computer simulation, we demonstrate that our approximation appears to be asymptotically valid.  In other words, from our computer simulation, as $n$ grows below the bound $9 \times 10^{5}$, our approximation to $mult(n)$ consistently improves.

This paper presents the construction of the heuristic and then gives computational evidence suggesting the validity of the asymptotic approximation.  In \S \ref{sec: Formation of the Singular Series} we construct the singular series via successive approximations to $mult(n)$.  In \S \ref{sec: Evaluation of the Singular Series} we present a proof of \cite{Huang} that is necessary to simplify the singular series in order to make the paper self-contained.  In \S \ref{sec: Evaluation of the Singular Series} we then use this result of \cite{Huang} to explicitly compute the value of the singular series and, consequently, the value of the multiplicity approximation; additionally, we show that the singular series averages to unity in the limit, which further supports our multiplicity conjecture.  In \S \ref{sec: Computational Methodology} we outline the algorithm used to precisely compute multiplicities for a bounded subset of $\Gamma_{A}$, and in \S \ref{sec: Computational Results} the exact computational results are compared to the approximate results given from our heuristic in order to provide additional justification for the asymptotic validity of our conjecture.  

As this paper builds upon the concept of the Hardy-Littlewood Circle Method, we direct the unfamiliar reader to \cite{MTB06} for a general treatment and suggest \cite{MN96} as an illustrative example of the Circle Method's application to Waring's Problem.

\section{Formation of the Singular Series}\label{sec: Formation of the Singular Series}
We desire to compute the multiplicity of a particular value $n$, this is exactly given by
\begin{equation}
mult(n) = \sum_{\gamma \in \Gamma_{A}} \mathbbm{1}_{ \{f(\gamma) - n = 0\} }.
\end{equation}  

We define the norm of an element $g \in \Gamma_{A}$ to be the maximum of the entries of $g$.  For the sake of notation, let $S_{n}(\Gamma_{A})$ be the sphere of norm $n$ in $\Gamma_{A}$.  As noted in the introduction, we need only consider semi-group elements in $S_{n}(\Gamma_{A})$ as only elements of this sphere can contribute to the multiplicity of $n$.  Thus,
\begin{equation}\label{eq: Exact multiplicity equation}
	mult(n) = \sum_{\gamma \in S_{n}(\Gamma_{A})} \mathbbm{1}_{ \{f(\gamma) - n = 0\} }.
\end{equation}

For the sake of notation, let $m(\gamma) = f(\gamma) - n$.

Recalling that 
\begin{equation}
	\int_{0}^{1}e(mx)e(-nx) \, dx = \begin{cases}
										0, & m \neq n\\
										1, & m = n
									\end{cases},
\end{equation}
for $e(mx) = e^{2 m \pi \imath x}$, \eqref{eq: Exact multiplicity equation} may be rewritten as 
\begin{equation}
\sum_{\gamma \in S_{n}(\Gamma_{A})} \int_{0}^{1} e\Big(xm(\gamma) \Big) \, dx.
\end{equation}

We may commute the sum and integral because $S_{n}(\Gamma_{A})$ contains only finitely many elements; this yields
\begin{equation} \label{eq: Exact multiplicicty equation integral}
\int_{0}^{1} \sum_{\gamma \in S_{n}(\Gamma_{A})} e\Big(xm(\gamma) \Big) \, dx.
\end{equation}

Instead of computing this integral, we attempt a heuristic approximation method that is similar in concept to that of the Circle Method.  Essentially, we approximate the integral at hand by only computing the singular series of the Circle Method; we then use simulation to show close asymptotic agreement to the precise multiplicity values.  To  encapsulate this concept, we fix some $Q \in \N$ as an upper bound on the denominators of rational numbers and then substitute for the integral in \eqref{eq: Exact multiplicicty equation integral} yielding

\begin{equation}\label{eq: First approximation of multiplicity}
\sum_{q < Q} \sum_{(a,q) = 1} \sum_{\gamma \in S_{n}(\Gamma_{A})} e \Big(\frac{a m(\gamma)}{q} \Big).
\end{equation}

Because $f$ satisfies the local condition, we may rewrite \eqref{eq: First approximation of multiplicity} in terms of the residue classes of $\Gamma_{A}$ modulo $q$.  Hence, \eqref{eq: First approximation of multiplicity} is presented equivalently as
\begin{equation}\label{eq: Residue form rewriting of first approximation}
\sum_{q < Q} \sum_{(a,q) = 1} \sum_{\gamma_{0} \in \Gamma_{A} (\text{mod }q)} e \Big(\frac{a m(\gamma_{0})}{q} \Big) \sum_{\gamma \in S_{n}(\Gamma_{A}) \atop \gamma \equiv \gamma_{0} \mod{q}}1.
\end{equation}

To simplify \eqref{eq: Residue form rewriting of first approximation}, we assume that $\Gamma_{A}$ is equidistributed over the residue classes; this assumption is examined in further depth in \S \ref{sec: Equidistribution}.That is to say that the number of elements whose projection under $f$ relates to one particular residue class is roughly the same as the number of elements whose projection under $f$ relates to another residue class.

Assuming equidistribution, \eqref{eq: Residue form rewriting of first approximation} becomes approximately
\begin{equation}\label{eq: Base equation for singular series}
\sum_{q < Q} \sum_{(a,q) = 1} \sum_{\gamma_{0} \in \Gamma_{A} (\text{mod }q)} e \Big(\frac{a m(\gamma_{0})}{q} \Big) \frac{\Big| S_{n}(\Gamma_{A}) \Big|}{\Big| \Gamma_{A} (\text{mod }q) \Big|}.
\end{equation}

Noticing that $\Big| S_{n}(\Gamma_{A}) \Big|$ has no dependency upon the values of $q$ or $a$, it may be factored out of the sum entirely.  Additionally, because $\Big| \Gamma_{A} (\text{mod }q) \Big|$ is dependent upon neither $\gamma_{0}$ nor $a$ it may be factored out of the second and third summations.  Thus, \eqref{eq: Base equation for singular series} becomes

\begin{equation}\label{eq: Reversed order of summation base equation}
\Big| S_{n}(\Gamma_{A}) \Big| \sum_{q < Q} \Bigg( \frac{1}{\Big| \Gamma_{A} (\text{mod }q) \Big|} \Bigg) \sum_{\gamma_{0} \in\Gamma_{A}(\text{mod }q)} \sum_{(a,q) = 1}  e \Big(\frac{a m(\gamma_{0})}{q} \Big).
\end{equation}

Recalling the Ramanujan sum 
\begin{equation}\
	c_{q}(n) = \sum_{(a, q) = 1}e \Big(\frac{a n}{q} \Big),
\end{equation}

we see that \eqref{eq: Reversed order of summation base equation} may be written as
\begin{equation}\label{eq: Ramanujan sum form of base equation}
\Big|  S_{n}(\Gamma_{A}) \Big| \sum_{q < Q} \Bigg( \frac{1}{\Big| \Gamma_{A} (\text{mod }q) \Big|} \Bigg) \sum_{\gamma_{0} \in\Gamma_{A}(\text{mod }q)} c_{q}(m(\gamma_{0})).
\end{equation}

It follows from the work of Hensley in \cite{Hensley89} that $|B_{n}(\Gamma_{A})| \asymp cn^{2 \delta}$  where $B_{n}(\Gamma_{A})$ is the ball of radius $n$ in $\Gamma_{A}$ under the Archimedean metric and $c$ and $\delta$ are constants depending on $\Gamma_{A}$.\footnote{See \cite{Hensley92} for further details concerning the value of $\delta$.}  Heuristically, $|S_{n}(\Gamma_{A})| \approx 2 \delta cn^{2 \delta - 1}$, and
\begin{equation}
	|S_{n}(\Gamma_{A})| \approx \frac{2 \delta |B_{n}(\Gamma_{A})|}{n}.
\end{equation}

From \eqref{eq: Ramanujan sum form of base equation} we construct the \emph{singular series}
\begin{equation}\label{eq: G(n)}
\mathfrak{G}(n) = \sum_{q= 1}^{\infty} \Bigg( \frac{1}{\Big| \Gamma_{A} (\text{mod }q) \Big|} \Bigg) \sum_{\gamma_{0} \in\Gamma_{A}(\text{mod }q)} c_{q}(m(\gamma_{0})).
\end{equation}

To simplify this equation, let 
\begin{equation} \label{eq: bar{C}_{q}}
	\bar{C}_{q}(\Gamma_{A}, n) = \Bigg( \frac{1}{\Big| \Gamma_{A} (\text{mod }q) \Big|} \Bigg) \sum_{\gamma_{0} \in \Gamma_{A}(\text{mod }q)} c_{q}(f(\gamma_{0}) - n).
\end{equation}

Note that the group $\Braket{\Gamma_{A}}$ is all of $\SLZ$, and the reduction of $\Braket{\Gamma_{A}}$ modulo $q$ is $\text{SL}_{2}(\Z / q\Z)$.  This is an elementary instance of strong approximation, which is treated in detail in \cite{Huang}.

Then, by multiplicativity and unique factorization \eqref{eq: G(n)} has the Euler product
\begin{equation} \label{Singular Series Preliminary Euler Product}
	\mathfrak{G}(n) = \prod_{p}\Bigg( 1 + \sum_{k = 1}^{\infty}\bar{C}_{p^{k}}\Big( \Gamma_{A}, n \Big) \Bigg).
\end{equation}

\section{Evaluation of the Singular Series}\label{sec: Evaluation of the Singular Series}
In order to evaluate \eqref{eq: G(n)}, one must first compute $\sum_{k = 1}^{\infty}\bar{C}_{p^{k}}\Big( \Gamma_{A}, n \Big)$.  Thus, $\bar{C}_{q}(\Gamma_{A}, n)$ need only be computed for prime power values of $q$.  Thus, consider $q = p^{t}$ where $p$ is a prime and $t \in \N$.

Following from \cite{Huang} we have

\begin{equation}\label{eq: evaluation of C}
\bar{C}_{p^{t}}(\Gamma, n) =  	\begin{cases}
\frac{-1}{p + 1}, & \text{if } t = 1, p|n \\
\frac{1}{p^{2} - 1}, & \text{if } t = 1, p \nmid n \\
0, & \text{if } t \geq 2 \\
\end{cases}.
\end{equation}

For the reader's convenience we reprove \eqref{eq: evaluation of C}
\begin{proof}
	\textbf{Case $t = 1, p|n$:}
	
	Recall that M\"{o}bius inversion of the Ramanujan sum $c_{q}(f(\gamma_{0}) - n)$ gives that 
	\begin{equation}
		c_{q}(f(\gamma_{0}) - n) = \sum_{s|(p, d-n)} s \mu\Big(\frac{p}{s}\Big).
	\end{equation}
	
	This allows us to rewrite the sum in \eqref{eq: bar{C}_{q}} as 
	\begin{align}
	\sum_{w \in SL_{2}(\Z/p\Z)}\sum_{s|(p, d-n)} s \mu\Big(\frac{p}{s}\Big) &= \sum_{w \in SL_{2}(\Z/p\Z) \atop (d - n, p) = 1} \mu(p) + \sum_{w \in SL_{2}(\Z/p\Z) \atop d \equiv n (p)} p\mu(1) + 1\mu(p)\nonumber\\
	&= -\sum_{w \in SL_{2}(\Z/p\Z) \atop (d, p) = 1} 1 + (p - 1)\sum_{w \in SL_{2}(\Z/p\Z) \atop d \equiv 0 (p)}1.
	\end{align}
	
	It is easily computed that the number of elements in $SL_{2}(\Z/p\Z)$ for which $(d, p) = 1$ is $p^{3} - p^{2}$ and that the number of elements in $SL_{2}(\Z/p\Z)$ for which $d \equiv 0 (p)$ is $p^{2} - p$.
	
	Thus,
	\begin{equation}
	 -\sum_{w \in SL_{2}(\Z/p\Z) \atop (d, p) = 1} 1 + (p - 1)\sum_{w \in SL_{2}(\Z/p\Z) \atop d \equiv 0 (p)}1 = -p^{2}(p - 1) + p(p - 1)^{2}.
	 \end{equation}
	
	And so, 
	\begin{equation} 
	\bar{C}_{p}(\Gamma, n) =   \Bigg( \frac{1}{\Big|SL_{2}(\Z / p\Z) \Big|} \Bigg) \Big(-p^{2}(p - 1) + p(p - 1)^{2}\Big) = \frac{-1}{p + 1}.
	\end{equation}
	
	\textbf{Case $t = 1, p\nmid n$:}
	Again, 
	\begin{align}
	\sum_{w \in SL_{2}(\Z/p\Z)}\sum_{s|(p, d-n)} s \mu\Big(\frac{p}{s}\Big) &= \sum_{w \in SL_{2}(\Z/p\Z) \atop (d - n, p) = 1} \mu(p) + \sum_{w \in SL_{2}(\Z/p\Z) \atop d \equiv n (p)} p\mu(1) + 1\mu(p)\nonumber\\
	&= -\sum_{w \in SL_{2}(\Z/p\Z) \atop (d - n, p) = 1} 1 + (p - 1)\sum_{w \in SL_{2}(\Z/p\Z) \atop d \equiv n (p)}1.
	\end{align}
	Note that the number of elements in $SL_{2}(\Z/p\Z)$ for which $(d - n, p) = 1$ is $p^{3} - p^{2} - p$ and that the number of elements in $SL_{2}(\Z/p\Z)$ for which $d \equiv n (p)$ is $p^{2}$.
	
	Thus,
	\begin{equation} 
	-\sum_{w \in SL_{2}(\Z/p\Z) \atop (d - n, p) = 1} 1 + (p - 1)\sum_{w \in SL_{2}(\Z/p\Z) \atop d \equiv n (p)}1 = p .
	\end{equation}
	
	Hence, 
	\begin{equation}
	 \bar{C}_{p}(\Gamma, n) =   \Bigg( \frac{1}{\Big|SL_{2}(\Z / p\Z) \Big|} \Bigg) \Big(p\Big) = \frac{1}{p^{2} - 1}.
	 \end{equation}
	
	\textbf{Case $t > 1$:}
	In this case, the only values of $s$ for which $\mu(\frac{p^{t}}{s})$ is non-zero are $p^{t}$ and $p^{t - 1}$.  Thus, 
	
	\begin{align}
	\sum_{w \in SL_{2}(\Z/p^{t}\Z)}\sum_{s|(p^{t}, d-n)} s \mu\Big(\frac{p^{t}}{s}\Big) &= \sum_{w \in SL_{2}(\Z/p^{t}\Z) \atop d \equiv n (p^{t})} p^{t}\mu(1) + p^{t - 1}\mu(p) + \sum_{w \in SL_{2}(\Z/p^{t}\Z) \atop d \equiv n (p^{t - 1}); d \not\equiv n (p^{t})} p^{t - 1}\mu(p)\nonumber\\
	&= p^{t - 1}\left((p - 1)\sum_{w \in SL_{2}(\Z/p^{t}\Z)}\mathbbm{1}_{\{d \equiv n (p^{t}) \}}- \sum_{w \in SL_{2}(\Z/p^{t}\Z)}\mathbbm{1}_{ 		\begin{Bmatrix}
		d \equiv n (p^{t - 1}) \\
		d \not\equiv n (p^{t})
		\end{Bmatrix} } \right).
	\end{align}
	
	All that remains to be shown is that 
	\begin{equation} 
	(p - 1)\sum_{w \in SL_{2}(\Z/p^{t}\Z)}\mathbbm{1}_{\{d \equiv n (p^{t}) \}} = \sum_{w \in SL_{2}(\Z/p^{t}\Z)}\mathbbm{1}_{ 	\begin{Bmatrix}
		d \equiv n (p^{t - 1}) \\
		d \not\equiv n (p^{t})
		\end{Bmatrix} }.
	\end{equation}
	
	This reduces to a simpler problem.  Namely, it must only be shown that, for $\gamma \equiv \begin{bmatrix}
	* & * \\
	* & n
	\end{bmatrix} \mod{p^{t - 1}}$
	\begin{equation}\label{Shinnyih's 5.32}
	(p - 1)\sum_{w \in SL_{2}(\Z/p^{t}\Z) \atop w \equiv \gamma (p^{t - 1})}\mathbbm{1}_{\{d \equiv n (p^{t}) \}} = \sum_{w \in SL_{2}(\Z/p^{t}\Z) \atop w \equiv \gamma (p^{t - 1})}\mathbbm{1}_{\{d \not\equiv n (p^{t})\}}.
	\end{equation}
	
	Given a $w \in SL_{2}(\Z/p^{t}\Z)$ so that $w \equiv \begin{bmatrix}
	a_{1} & b_{1} \\
	c_{1} & n 
	\end{bmatrix} (p^{t - 1})$ the matrix $w$ may be rewritten as 
	\begin{equation} 
	w =  \begin{bmatrix}
	a_{1} + p^{t - 1}k_{1} & b_{1} + p^{t - 1}k_{2} \\
	c_{1} + p^{t - 1}k_{1} & n + p^{t - 1}k_{1} 
	\end{bmatrix},
	\end{equation}
	
	for which $0 \leq k_{i} < p$ and $p|\frac{det(w)}{p^{t - 1}}$.  For any choice of $k_{4}$, the triplet $(k_{1}, k_{2}, k_{3})$ may have $p^{2}$ values.  Thus, both the right-hand side and left-hand side of \eqref{Shinnyih's 5.32} agree as they both hold the value $(p - 1)p^{2}$.  This concludes the proof.
\end{proof}

Armed with this information, we may now put $\mathfrak{G}(n)$ into a more practical form.  Simply substituting for appropriate values of $\bar{C}_{p^{t}}(\Gamma_{A}, n)$ in \eqref{Singular Series Preliminary Euler Product} gives
\begin{align}
\mathfrak{G}(n) = \prod_{p} \Big(1 + \sum_{q = 1}^{\infty} \bar{C}_{p^{q}}(\Gamma, n) \Big) &=  \prod_{p |n} \Big(1 + \frac{-1}{p + 1} \Big) \prod_{p\, \nmid \,n} \Big(1 + \frac{1}{p^{2} - 1} \Big)\nonumber \\
&= \prod_{p |n} \Big(1 + \frac{-1}{p + 1} \Big) \Big(1 + \frac{1}{p^{2} - 1} \Big)^{-1} \prod_{p}\Big(1 + \frac{1}{p^{2} - 1} \Big)\nonumber\\
&= \prod_{p |n} \Big(\frac{p - 1}{p} \Big) \zeta(2).
\end{align}

Based upon our construction we form the following conjecture
\begin{conjecture}\label{conj: mult conjecture}
	The multiplicity of a given value of $n$ in the image of $\Gamma_{A}$ under the projection mapping that selects the bottom-right entry of $\gamma \in \Gamma_{A}$ is 
	\begin{equation}
	mult(n) \sim \frac{2 \delta \Big| B_{n}(\Gamma_{A}) \Big|}{n}\prod_{p |n} \Big(\frac{p - 1}{p} \Big) \zeta(2).
	\end{equation}
\end{conjecture}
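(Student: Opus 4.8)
The plan is to promote the Circle-Method heuristic of \S\ref{sec: Formation of the Singular Series}--\S\ref{sec: Evaluation of the Singular Series} to a genuine asymptotic by carrying out an honest Hardy--Littlewood dissection of the exact integral \eqref{eq: Exact multiplicicty equation integral}. Write $mult(n) = \int_0^1 F_n(x)\,dx$ with $F_n(x) = \sum_{\gamma \in S_n(\Gamma_A)} e\big(x\,m(\gamma)\big)$, fix a cutoff $Q = Q(n)\to\infty$ growing slowly (e.g.\ a power of $\log n$), and split $[0,1]$ into major arcs $\mathfrak{M}_{a,q}$ centred on the reduced fractions $a/q$ with $q \le Q$, together with the complementary minor set $\mathfrak{m}$. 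One then wants $\int_{\mathfrak{M}} F_n = (\text{main term}) + o(|B_n(\Gamma_A)|/n)$ and $\int_{\mathfrak{m}} F_n = o(|B_n(\Gamma_A)|/n)$.

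\textbf{Major arcs.} On $\mathfrak{M}_{a,q}$ write $x = a/q + \beta$; using the local condition \eqref{Local Condition} to partition $S_n(\Gamma_A)$ into residues $\gamma_0$ modulo $q$, the phase factors as $e\big(a\,m(\gamma_0)/q\big)\,e\big(\beta\,m(\gamma)\big)$, so the contribution of the class $\gamma_0$ is $e\big(a\,m(\gamma_0)/q\big)\cdot N(n;\gamma_0,q)$ times a short-interval integral, where $N(n;\gamma_0,q) = \#\{\gamma \in S_n(\Gamma_A) : \gamma \equiv \gamma_0 \ (\mathrm{mod}\ q)\}$. The substantive input is the effective equidistribution
\[
N(n;\gamma_0,q) = \frac{|S_n(\Gamma_A)|}{|SL_2(\Z/q\Z)|}\Big(1 + O\big(q^{C} n^{-\eta}\big)\Big),\qquad \eta > 0,
\]
uniformly for $q \le Q$ --- which for the thin semigroup $\Gamma_A$ is exactly the sort of statement established by Bourgain--Kontorovich toward the density-one Zaremba theorem \cite{KontBour14}, resting on the expander/spectral-gap property of the congruence quotients of $\langle S\rangle$. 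Granting it, the sum over $a$ collapses to the Ramanujan sum $c_q(m(\gamma_0))$, the sum over $q \le Q$ reproduces $|S_n(\Gamma_A)|\sum_{q\le Q}\bar{C}_q(\Gamma_A,n)$, and by \eqref{eq: evaluation of C}, CRT-multiplicativity, and the convergence of the Euler product \eqref{Singular Series Preliminary Euler Product}, this equals $|S_n(\Gamma_A)|\big(\mathfrak{G}(n) + o(1)\big)$ for a suitable $Q(n)\to\infty$. Since $\mathfrak{G}(n) = \prod_{p|n}\tfrac{p-1}{p}\,\zeta(2)$ was computed in \S\ref{sec: Evaluation of the Singular Series}, the major arcs contribute $|S_n(\Gamma_A)|\prod_{p|n}\tfrac{p-1}{p}\,\zeta(2)\,(1 + o(1))$.

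\textbf{From the sphere to the ball.} It remains to replace $|S_n(\Gamma_A)|$ by $2\delta|B_n(\Gamma_A)|/n$. Hensley's estimate \cite{Hensley89} $|B_n(\Gamma_A)| \asymp c\,n^{2\delta}$ only formally differences to $|S_n(\Gamma_A)| \approx 2\delta c\,n^{2\delta-1}$; making this rigorous requires a power-saving ball count $|B_n(\Gamma_A)| = c\,n^{2\delta} + O(n^{2\delta-\kappa})$, after which summation by parts yields $|S_n(\Gamma_A)| = 2\delta c\,n^{2\delta-1}(1+o(1)) = \frac{2\delta|B_n(\Gamma_A)|}{n}(1+o(1))$. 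Such effective orbit counting for $\Gamma_A$ is supplied by the transfer-operator (thermodynamic-formalism) analysis of the $\mathcal{A}$-restricted Gauss map, which provides a spectral gap for the associated Ruelle operator; the same input also upgrades the equidistribution above from balls to spheres.

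\textbf{Minor arcs, and the obstacle.} The crux is $\int_{\mathfrak{m}}|F_n(x)|\,dx = o\big(|B_n(\Gamma_A)|/n\big)$, i.e.\ a nontrivial bound on the exponential sum $F_n(x) = \sum_{\gamma \in S_n(\Gamma_A)} e\big(x f(\gamma)\big)$ --- a sum over the bottom-right entries of the norm-$n$ elements of $\Gamma_A$ --- valid when $x$ is badly approximable by fractions of denominator $\le Q$. Extracting genuine cancellation here is an $\ell^2$-flattening / sum--product problem of Bourgain--Kontorovich type; absent a power saving, Cauchy--Schwarz together with the circle-method identity $\int_0^1 |F_n(x)|^2\,dx = \#\{(\gamma_1,\gamma_2)\in S_n(\Gamma_A)^2 : f(\gamma_1)=f(\gamma_2)\}$ only ties the minor-arc mass back to the second moment of $mult$, and closing that loop is precisely what is missing. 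In summary, the major-arc analysis and the sphere count are within reach of existing effective-equidistribution and orbit-counting technology and already single out the conjectured expression as the correct main term; the open ingredient --- and the reason this is stated as a conjecture rather than a theorem --- is the minor-arc cancellation, which for a general alphabet $\mathcal{A}$ is as hard as the unresolved cases of Zaremba-type problems. I therefore expect the minor-arc estimate to be the main obstacle, with the $q$-uniformity of the major-arc equidistribution a serious but more approachable secondary difficulty.
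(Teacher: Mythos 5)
Your derivation of the main term is exactly the paper's heuristic: partition the sphere into residue classes via the local condition, invoke equidistribution to replace the class counts by $|S_{n}(\Gamma_{A})|/|SL_{2}(\Z/q\Z)|$, collapse the sum over $a$ into Ramanujan sums to form the singular series $\mathfrak{G}(n)$, evaluate it through the Euler product using \eqref{eq: evaluation of C}, and use Hensley's ball count to pass from $|S_{n}(\Gamma_{A})|$ to $2\delta|B_{n}(\Gamma_{A})|/n$. The paper never attempts the minor arcs at all (it simply replaces the integral by point masses at rationals of denominator below $Q$ and checks the outcome numerically), so your explicit identification of the minor-arc cancellation and the $q$-uniform effective equidistribution as the missing ingredients is a faithful --- and more candid --- account of why the statement is a conjecture rather than a theorem.
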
 

If our conjecture is indeed valid, then it must be the case that 
\begin{equation}\label{eq: Sum of multiplicities conjecture}
\Big| B_{N}(\Gamma_{A}) \Big| \sim \sum_{n = 1}^{N} \frac{2 \delta \Big| B_{n}(\Gamma_{A}) \Big|}{n}\prod_{p |n} \Big(\frac{p - 1}{p} \Big) \zeta(2).
\end{equation}

as $|B_{N}(\Gamma_{A})| = \sum_{n = 1}^{N} mult(n)$.  To show \eqref{eq: Sum of multiplicities conjecture} we will show that $\mathfrak{G}(n)$ averages to 1 in the limit as $N \rightarrow \infty$ and then we will use summation by parts to conclude the justification.

\begin{lemma}\label{lem: G(n) averages to 1}
	$\mathfrak{G}(n)$ averages to 1 in the limit as $n \rightarrow \infty$.  Formally,
	\begin{equation}
	\lim\limits_{N \rightarrow \infty} \frac{1}{N} \sum_{n = 1}^{N} \mathfrak{G}(n) = 1.
	\end{equation}
\end{lemma}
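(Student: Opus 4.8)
The plan is to exploit the closed form for $\mathfrak{G}(n)$ established just above, namely $\mathfrak{G}(n)=\prod_{p\mid n}\big(\tfrac{p-1}{p}\big)\zeta(2)$, and reduce the claim to a classical average of a multiplicative function. First I would observe that $\prod_{p\mid n}\tfrac{p-1}{p}=\prod_{p\mid n}\big(1-\tfrac1p\big)=\tfrac{\varphi(n)}{n}$, so that $\mathfrak{G}(n)=\zeta(2)\,\tfrac{\varphi(n)}{n}$. Consequently the lemma is equivalent to the statement
\[
\lim_{N\to\infty}\frac1N\sum_{n=1}^N\frac{\varphi(n)}{n}=\frac{1}{\zeta(2)},
\]
which is a well-known fact; the remaining work is to supply a short proof.

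Next I would use the Dirichlet-convolution identity $\varphi=\mu*\mathrm{id}$, i.e. $\tfrac{\varphi(n)}{n}=\sum_{d\mid n}\tfrac{\mu(d)}{d}$, and interchange the (finite) order of summation:
\[
\sum_{n\le N}\frac{\varphi(n)}{n}=\sum_{n\le N}\sum_{d\mid n}\frac{\mu(d)}{d}=\sum_{d\le N}\frac{\mu(d)}{d}\left\lfloor\frac{N}{d}\right\rfloor.
\]
Replacing $\lfloor N/d\rfloor$ by $N/d+O(1)$ gives
\[
\sum_{n\le N}\frac{\varphi(n)}{n}=N\sum_{d\le N}\frac{\mu(d)}{d^2}+O\!\Big(\sum_{d\le N}\tfrac1d\Big)=N\sum_{d=1}^{\infty}\frac{\mu(d)}{d^2}+O(\log N),
\]
where in the last step I extend the sum over $d$ to infinity at the cost of a tail $N\sum_{d>N}\mu(d)/d^2=O(1)$, bounded by comparison with $\sum_{d>N}d^{-2}=O(1/N)$. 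Since $\sum_{d=1}^{\infty}\mu(d)/d^2=1/\zeta(2)$, dividing by $N$ and multiplying through by $\zeta(2)$ yields $\frac1N\sum_{n\le N}\mathfrak{G}(n)=1+O\!\big(\tfrac{\log N}{N}\big)\to1$.

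I do not anticipate a genuine obstacle here: every step is elementary, the error terms are crude, and the interchange of summation is legitimate because both sums are finite. The only point needing a line of care is the tail estimate for $\sum_{d>N}\mu(d)/d^2$, handled by absolute comparison with the convergent series $\sum d^{-2}$. If one prefers to avoid explicit error bookkeeping altogether, an alternative is to write $\frac1N\sum_{n\le N}\tfrac{\varphi(n)}{n}=\sum_{d\ge1}\tfrac{\mu(d)}{d}\cdot\tfrac{\lfloor N/d\rfloor}{N}$ and apply dominated convergence: each summand tends pointwise to $\mu(d)/d^2$ and is bounded in absolute value by $1/d^2$ (since $\lfloor N/d\rfloor\le N/d$), which is summable; this immediately gives the limit $\sum_d\mu(d)/d^2=1/\zeta(2)$ and hence the lemma.
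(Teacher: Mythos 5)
Your proof is correct and follows essentially the same route as the paper's: both expand $\prod_{p\mid n}\bigl(1-\tfrac1p\bigr)$ via M\"obius as $\sum_{d\mid n}\mu(d)/d$, swap the order of summation to get $\sum_{d\le N}\tfrac{\mu(d)}{d}\lfloor N/d\rfloor$, and conclude from $\sum_{d\ge1}\mu(d)/d^{2}=1/\zeta(2)$. Your version is in fact slightly more careful, since you make explicit the $O(\log N)$ error and the tail bound that the paper elides with an ``$\approx$''.
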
 
\begin{proof}
	\begin{align}
	\frac{1}{N} \sum_{n = 1}^{N} \mathfrak{G}(n) &= \frac{1}{N} \sum_{n = 1}^{N} \prod_{p|n}\big(\frac{p - 1}{p} \big)\big(\frac{\pi^{2}}{6} \big)\nonumber\\
	&= \frac{1}{N} \sum_{n = 1}^{N} \sum_{m|n}\big(\frac{\mu(m)}{m} \big)\big(\frac{\pi^{2}}{6} \big)\nonumber\\
	&= \frac{1}{N} \sum_{m \leq N} \sum_{n \equiv 0 (m) \atop n \leq N} \big(\frac{\mu(m)}{m} \big)\big(\frac{\pi^{2}}{6} \big)\nonumber\\
	&= \big(\frac{\pi^{2}}{6} \big) \frac{1}{N} \sum_{m \leq N} \big(\frac{\mu(m)}{m} \big) \left[\frac{N}{m}\right]\nonumber\\
	&\approx \big(\frac{\pi^{2}}{6} \big) \sum_{m \leq N} \big(\frac{\mu(m)}{m^{2}} \big).
	\end{align}
	
	Because $\sum_{m = 1}^{\infty} \big(\frac{\mu(m)}{m^{2}} \big) = \frac{1}{\zeta(2)}$, 
	\begin{equation}
	\lim\limits_{N \rightarrow \infty} \big(\frac{\pi^{2}}{6} \big) \sum_{m \leq N} \big(\frac{\mu(m)}{m^{2}} \big) = \big(\frac{\pi^{2}}{6} \big) \frac{1}{\zeta(2)} = 1.
	\end{equation}
\end{proof}

Using Lemma~\eqref{lem: G(n) averages to 1} we see that $\sum_{n = 1}^{N}\mathfrak{G}(n) \sim N$.  Summing $\sum_{n = 1}^{N} \frac{2 \delta | B_{n}(\Gamma_{A}) |}{n}\mathfrak{G}(n)$ by parts yields,

\begin{align}
\sum_{n = 1}^{N} \frac{2 \delta \Big| B_{n}(\Gamma_{A}) \Big|}{n}\mathfrak{G}(n) &= \sum_{n = 1}^{N} \frac{2 \delta cn^{2\delta}}{n}\mathfrak{G}(n)\nonumber \\
&= 2\delta c \Big((N + 1)^{2\delta - 1}(N + 1) - 1 - \sum_{n = 1}^{N}(n + 1)\Big((n + 1)^{2\delta - 1} - n^{2\delta - 1} \Big) \Big)\nonumber\\
&= 2\delta c\Big((N + 1)^{2\delta} - 1 - \sum_{n = 1}^{N}\Big((n + 1)^{2\delta} - n^{2\delta} - n^{2\delta - 1} \Big) \Big)\nonumber\\
&= cN^{2\delta}.
\end{align}

It then follows that \eqref{eq: Sum of multiplicities conjecture} holds.

\section{Computational Methodology}\label{sec: Computational Methodology}
In order to test the validity of Conjecture~\eqref{conj: mult conjecture} we developed an algorithm to efficiently compute multiplicities for a large range of target values.  The algorithm functions by forming the set \hypertarget{Generating set S}{$S$} using $2 \times 2$ arrays.  In order to compute only even length products of elements in $S$, we create $S_{2}$ by forming $S \times S$ and mapping each $(s_{1}, s_{2}) \in S \times S$ to $s_{1}s_{2}$.  We then consider the semi-group generated by $S_{2}$; clearly this is equivalent to our old formulation of $\Gamma_{A}$.  It ought to be noted that for computation purposes, $A$ was set to be 5.  This choice of $A$ is not arbitrary; in fact it was proposed by Zaremba.  Specifically, for $A \in \{1, 2, 3, 4\}$ there are examples showing that Zaremba's Conjecture fails.  For greater detail of this point we direct the reader to \cite{Kont13}.

It may come to mind that the Cayley graph of $\Gamma_{A}$ with respect to $S_{2}$ resembles an $A^{2}$-ary tree; this is not strictly true.  The reason that this is not true is because the identity matrix $I_{2 \times 2}$ is not an element of $\Gamma_{A}$; however, setting $I_{2 \times 2}$ as the root of an $A^{2}$-ary tree with all non-root nodes elements of $\Gamma_{A}$ allows us to perform a highly efficient recursive algorithm on this modified Cayley graph in order to both build and tally multiplicities simultaneously.  



\section{Computational Results}\label{sec: Computational Results}
To test the asymptotic behavior of our heuristic for multiplicities, we computed multiplicities for target values ranging from $1$ to $9 \times 10^{5}$.  We then directly evaluated $\frac{2 \delta | B_{n}(\Gamma_{A}) |}{n}\mathfrak{G}(n)$ for each of these values as well.  Under our conjecture, the ratio of the true multiplicity of a given target to the heuristically calculated value for the same target ought to limit to 1.  Upon inspection, it appears that this is the case.
\begin{figure}[h!]
	\centering
	\includegraphics[width = 4.5in]{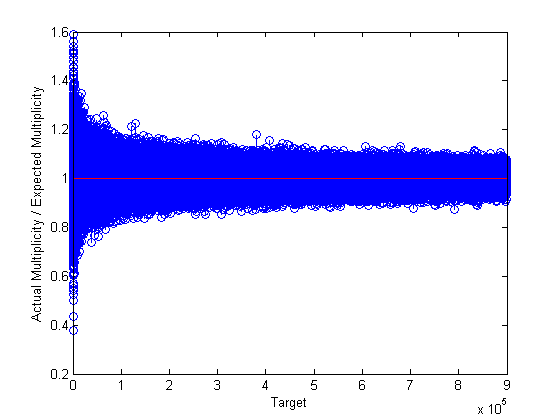}
	
	\caption{Multiplicity vs. heuristic ratios for target values from 1 to $9 \times 10^{5}$}
	\label{fig:MultRatios}
\end{figure}

\newpage
\section{Conclusion}
To conclude, we note that a great deal of work remains to be done.  Namely, in order to conform more fully to the Circle Method, an appropriate treatment of the major arc analysis is necessary.  For our purposes, we approximated the integral in \eqref{eq: Exact multiplicicty equation integral} using point masses; however, analysis of the major arcs may lead to further accuracy of the resulting heuristic.

\section{Acknowledgments}

Many thanks to Alex Kontorovich for his guidance.  Additionally, we acknowledge support from Kontorovich's NSF grants  DMS-1209373, DMS-1064214, DMS-1001252, and Kontorovich's NSF CAREER grant DMS-1254788

\appendix
\section{Equidistribution of $\Gamma_{A}$ Over Residue Classes}\label{sec: Equidistribution}
In \S \ref{sec: Formation of the Singular Series}, we rely upon the equidistribution of the $\Gamma_{A}$ over residue classes modulo $q$ in order to state that 
\begin{equation}\label{eq: equidistribution equation}
	\sum_{q < Q} \sum_{(a,q) = 1} \sum_{\gamma \in S_{n}(\Gamma_{A})} e \Big(\frac{a m(\gamma)}{q} \Big) = \sum_{q < Q} \sum_{(a,q) = 1} \sum_{\gamma_{0} \in \Gamma_{A} (\text{mod }q)} e \Big(\frac{a m(\gamma_{0})}{q} \Big) \sum_{\gamma \in S_{n}(\Gamma_{A}) \atop \gamma \equiv \gamma_{0} \mod{q}}1.
\end{equation}

It is not immediately obvious that $f(\Gamma_{A})$ is equidistributed over residue classes.  In fact, for small values of $q$, this is not the case.  However, simulation of the distribution of $\Gamma_{A}$ over residue classes modulo $q$ for increasing values of $q$ suggests that this is likely asymptotically true.

In order to simulate the distribution of $\Gamma_{A}$ over residue classes, the same computational methodology was employed as in \S \ref{sec: Computational Methodology} to compute the values of $T \mod{q}$ for $T$ a finite bounded subset of $\Gamma_{A}$.\footnote{For the sake of consistency, we continue to simulate using $A = 5$.}  However, in this implementation, we store which residue class each matrix falls into for moduli ranging from $1$ to $30$.  Then, normalizing by the total number of matrices in $T$, we compute the absolute error between the observed residue count modulo $m$ and the expected residue count under the assumption of equidistribution over residue classes modulo $m$.  Formally, we compute the absolute error as
\begin{equation}
	\text{Absolute Error}(r,m) := \left| \#\left\{\text{matrices in $T$ with residue }r \text{ mod } m \right\} - \frac{|T|}{m}\right|.
\end{equation}

Then, we define
\begin{equation}▲
	\text{Largest Error}(m) := \max\limits_{0 \leq r < m} \left\{ \text{Absolute Error}(r,m) \right\}.
\end{equation}

\newpage
Simulation of $\text{Largest Error}(m)$ for $1 \leq m \leq q$ shows that as $q$ grows, the greatest absolute error between the observed distribution and the expected equidistribution shrinks.  This is encapsulated in the following figure.

\begin{figure}[h!]\label{fig: Equidistribution Image}
	\centering
	\includegraphics[width = 4.5in]{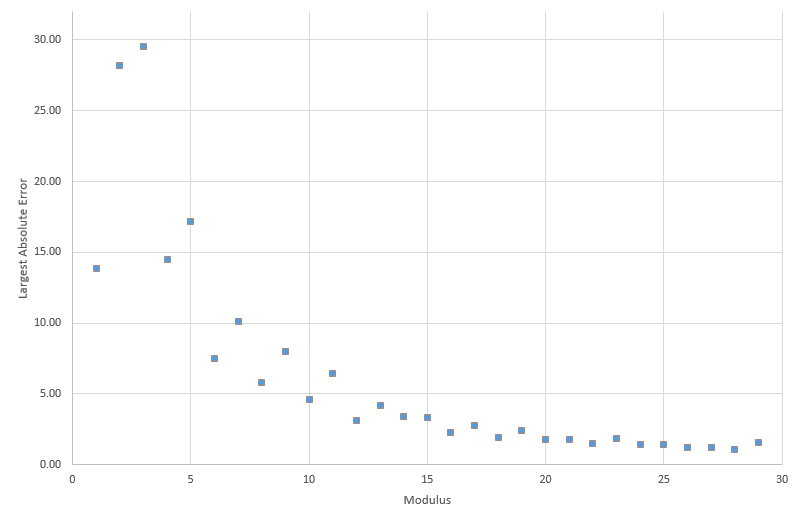}
	
	\caption{Largest absolute error plotted against modulus.}
\end{figure}

As the largest absolute error away from equidistribution shrinks rapidly, the assumption of equidistribution is supported in the limit, and thus  \eqref{eq: equidistribution equation} is supported.

\bibliographystyle{alpha}
\bibliography{FinalWriteupBibliography}

\section{Contact Information}
Peter Cohen\\
\href{mailto:plcohen@mit.edu}{plcohen@mit.edu}\\
609-571-5102


\end{document}